\newtheorem{theorem}{Theorem}
\newtheorem{lemma}{Lemma}
\newcommand{\E}{\mathbb{E}}
\newcommand{\drv}{\mathrm{d}}
\newcommand{\N}{\mathcal{N}}
\DeclareMathOperator*{\argmax}{arg\,max}
\DeclareMathOperator*{\argmin}{arg\,min}
\newcommand{\btheta}{\bm{\theta}}
\newcommand{\bpsi}{\bm{\psi}}
\newcommand{\R}{\mathbb{R}}
\title{On Parametric Misspecified Bayesian Cram\'{e}r-Rao bound:\\ An application to linear Gaussian systems
\vspace{-0.3cm}
}
\name{
\vspace{-0.3cm}
Shuo Tang, Gerald LaMountain, Tales Imbiriba, Pau Closas\thanks{This work has been partially supported by the NSF under Award ECCS-1845833.}}
\address{Dept. of Electrical \& Computer Engineering, Northeastern University, Boston, MA (USA)}
\begin{document}
%
\maketitle
\begin{abstract}
A lower bound is an important tool for predicting the performance that an estimator can achieve under a particular statistical model. Bayesian bounds are a kind of such bounds which not only utilizes the observation statistics but also includes the prior model information. In reality, however, the true model generating the data is either unknown or simplified when deriving estimators, which motivates the works to derive estimation bounds under modeling mismatch situations. 
This paper provides a derivation of a Bayesian Cram\'{e}r-Rao bound under model misspecification, defining important concepts such as pseudotrue parameter that were not clearly identified in previous works. The general result is particularized in linear and Gaussian problems, where closed-forms are available and results are used to validate the results. 
\end{abstract}
\begin{keywords}
Model misspecification, Bayesian bound, Cram\'{e}r-Rao Bound, estimation theory.
\end{keywords}
%
\section{Introduction}
\label{sec:intro}

Parameter estimation is at the core of many statistical signal processing and machine learning disciplines, with applications in many practical problems such as positioning, navigation, wireless communication, image process, etc.
%
%
%
The study of unbiased estimators led to the derivation of different bounds on it's variance \cite{Fundamentals1993Kay,van2004detection}. Such bounds provide a benchmark to assess or compare the performance of estimators, as well as, characterizing the best attainable performance given a model and data distribution. These bounds can be classified into two categories: classical and Bayesian  \cite{closas2009bayesian}. Classical bounds include the well-known Cram\'{e}r-Rao bound (CRB), the Bhattacharyya Bound \cite{bhattacharyya1946some} and the the Barankin bound \cite{mcaulay1971barankin}. In the category of Bayesian bounds we can find the Bayesian Cram\'{e}r-Rao bound (BCRB) \cite{Fundamentals1993Kay, van2004detection,dauwels2005computing}, the Bayesian Bhattacharyya Bound \cite{bhattacharyya1946some}, the Weiss-Weinstein bound \cite{weiss1985lower} and the Ziv-Zakai bound \cite{Chazan1975}.


In this paper, we are interested in the Bayesian CRB, which assumes a general statistical model with unknown and random vector parameter $\bpsi = (\psi_1, \ldots, \psi_{n_\psi})^\top \!\!\!\in \R^{n_\psi \times 1}$, such that the model is characterized by its \textit{a priori} and likelihood distributions  
\begin{align}\label{eq:TrueModel}
    \mathcal{M}_{\ast} = \{\bm{x}|\bpsi \sim p_{\ast} (\bm{x} | \bpsi) ,\; \bpsi\sim p(\bpsi) ~:~ \bpsi \in \Psi \subset \mathbb{R}^{n_\psi}\}
\end{align}
\noindent respectively.
For any unbiased estimator of $\bpsi$, denoted as $\hat{\bpsi}(\bm{x})$, the BCRB states that 
\begin{align}\label{eq:BCRB}
    \E_{\bm{x},\bpsi}\big\{\big(\hat{\bpsi}(\bm{x}) - \bpsi\big)\big(\hat{\bpsi}(\bm{x}) - \bpsi\big)^\top\big\} - \bm{J}^{-1} \geq \mathbf{0} \;,
\end{align}
\noindent where the inequality involving the estimation error covariance denotes that the left-hand side is a positive semidefinite matrix. This result can be used to lower bound the terms in  the estimation error covariance $\E_{\bm{x},\bpsi}\big\{\big(\hat{\bpsi}(\bm{x}) - \bpsi\big)\big(\hat{\bpsi}(\bm{x}) - \bpsi\big)^\top\big\}$.
In \eqref{eq:BCRB}, $\bm{J} \in \R^{n_\psi \times n_\psi}$ denotes the so-called Bayesian Fisher Information Matrix (BFIM)
\begin{align}\label{eq:PFIM}
    \bm{J} &= \E_{\bm{x},\bpsi}\bigg\{\bigg(\frac{\partial}{\partial\bpsi} \ln p(\bm{x}|\bpsi)\bigg)\bigg(\frac{\partial}{\partial\bpsi} \ln p(\bm{x}|\bpsi)\bigg)^\top\bigg\} \nonumber\\
    &\qquad+ \E_{\bpsi}\bigg\{\bigg(\frac{\partial}{\partial\bpsi} \ln p(\bpsi)\bigg)\bigg(\frac{\partial}{\partial\bpsi} \ln p(\bpsi)\bigg)^\top\bigg\}\nonumber\\
    &= \bm{J}_D + \bm{J}_P \;,
\end{align}
\noindent which is composed of the Fisher Information Matrix ($\bm{J}_D$, accounting for the information on $\bpsi$ from the data) and the prior information matrix ($\bm{J}_P$, accounting for the a priori on $\bpsi$).

One major limitation of these theories is that these lower bounds are derived under the assumption that the true model \eqref{eq:TrueModel} which generated the observations is known. However, such knowledge of the true model is not always available \cite{zoubir2012robust}, either because the true underlying model is complex to characterize or because simplified models are preferred for the sake of tractability. 
In such situations the bounds mentioned above may become loose or even invalid since the estimators are derived based on a statistical model $\mathcal{M}$ that differs from the \emph{true} underlying model $\mathcal{M}_\ast$. In this context, model $\mathcal{M}$ is often referred to as \emph{misspecified} or \emph{mismatched} \cite{vuong1986cramer, richmond2015parameter}. 
Recently, there has been an increased interest in deriving bounds for the sort of mismatched problems described above. These works extend the theories of non-Bayesian bounds, mostly CRB-type, to the case of model misspecification where the most prominent works are in the area of Misspecified CRB (MCRB) \cite{fortunati2017performance}.

In that context, in a non-Bayesian setting where parameters are considered deterministic, an estimator designed optimally under $\mathcal{M}$ (e.g. the quasi maximum likelihood estimator, QMLE) is known to asymptotically converge to the so-called \textit{pseudotrue} parameter, which is the parameter in $\mathcal{M}$ that minimizes its Kullback-Leibler (KL) divergence with respect to $\mathcal{M}_\ast$.
As far as the authors know, Huber \cite{huber1967under} was the first at investigating the behavior of MLE under model misspecification  and discussed the concept of the pseudotrue parameter. Based on these ideas, the seminal work of Vuong \cite{vuong1986cramer} was the first to explore CRB-type lower bounds on estimation accuracy under model mismatches. This work used implicit function theory to provide the results and clarified the necessary assumptions for its derivation, which was inherited by many follow up works that appear only recently. 
For instance, Richmond and Horowitz \cite{richmond2015parameter} formalized the theory of MCRB by employing minimum norm theorem and the covariance inequality. Despite of the similar form of the bound, \cite{richmond2015parameter} resulted in a more restricted class of the suitable estimators than Vuong's original work \cite{vuong1986cramer} did due to the employed constraints on the minimum norm theorem. However, that work actually advanced the understanding of the result in \cite{vuong1986cramer} and provided a valid bound for any unbiased estimator of the pseudotrue. A detailed comparison of those two works can be found in \cite{richmond2018constraints}, which also provided great inspiration to this paper. Besides, the MCRB for parameter estimation has been recently used in many applications, such as sparse Bayesian learning \cite{richmond2016bayesian}, radar communications \cite{richmond2020misspecified}, or in the general class of elliptically distributed distributions \cite{fortunati2016misspecified}. In these works, the pseudotrue and the estimator are considered unknown but deterministic parameters and thus the bounds are used to bound estimators that are misspecified unbiased (MS-unbiased), such as the QMLE.

Less attention has been given to the study of estimation bounds under Bayesian mismatched models. In the review paper by Fortunati et al. \cite{fortunati2017performance}, this idea is discussed and deemed of relevance. 
In many situations, it is natural to account for the parameter's prior information in deriving an estimator, which adds an extra source of information but potentially adds another mismatch. Some works investigated the derivation of Bayesian estimation bounds under model mismatch, such as CRB-type \cite{kantor2015prior} or ZZB-type \cite{Gusi2014b,gusi2016mean}, although a general framework is still missing. 

In this paper, we provide a derivation of a Bayesian CRB for mismatched models, which we term as Misspecified BCRB (MBCRB). The derivation is done considering a parametric transformation of the true parameter space into the parameter space of the assumed model, similarly to the parametric approach in \cite{richmond2018constraints}. This provides a CRB-type result where the BCRB of the true parameters is mapped to the MBCRB of interest. In doing so, we propose a definition for the pseudotrue parameter, which is different from the one considered for MCRB as it accounts for the joint distribution of the assumed model. The bound is valid for MS-unbiased estimators with respect to such pseudotrue. 
The general MBCRB expression is particularized for linear and Gaussian models, where closed-form expressions are obtained. The bound is validated on synthetic data where various sorts of model mismatches are tested, showing tight prediction capabilities of the MBCRB.

\section{MISSPECIFIED BCRB}
\label{sec:MBCRB}
In this section we derive a parametric misspecified BCRB. 
For such, let us consider that $i)$ \eqref{eq:TrueModel} describes the true model, parameterized by $\bpsi$; and $ii)$ the assumed, potentially misspecified, model is given by
\begin{align}\label{eq:AssumedModel}
    \mathcal{M} = \{\bm{x}|\btheta \sim f (\bm{x} | \btheta) ,\; \btheta\sim f(\btheta)
~:~ \btheta \in \Theta \subset \mathbb{R}^{n_\theta}\}
\end{align}
\noindent where $\btheta = (\theta_1, \ldots , \theta_{n_\theta})^\top \in \R^{{n_\theta} \times 1}$ denotes the unknown random parameter which the estimator is attempting to infer from the available data $\bm{x}\in\mathbb{R}^{n_x \times 1}$ and the assumed statistical model $\mathcal{M}$. Notice that $\bpsi$ and $\btheta$ do not necessarily belong to the same parameter space, although they can in certain situations, as explained in \cite{fortunati2017performance}. 

We define the pseudotrue parameter as the parameter vector in $\Theta$ which minimizes the KL divergence between the true distribution and the assumed joint distribution $f(\bm{x},\btheta)$. The true model is set to be $p(\bm{x}|\psi)$ as Richmond and Fortunati did in \cite{richmond2015parameter, fortunati2017performance}, resulting in a pseudotrue that depends on the actual realization of $\bpsi$:
\begin{align}\label{eq:pseudotrue}
    \btheta_0(\bpsi) &= \argmin\limits_{\btheta} \mathcal{D}\bigg(p(\bm{x}|\bpsi)||f(\bm{x}, \btheta)\bigg)\nonumber\\
    &= \argmin\limits_{\btheta}\bigg( - \E_{\bm{x}|\bpsi}\bigg\{\ln f(\bm{x}, \btheta)\bigg\}\bigg),
\end{align}
which is slightly different to the pseudotrue parameter defined in other the MCRB works, \cite{vuong1986cramer,richmond2015parameter,richmond2018constraints,fortunati2017performance}. Based on this pseudotrue, this paper aims to find a lower bound for $\E_{\bm{x},\bpsi}\big\{\big(\hat{\btheta}(\bm{x}) - \btheta_{0}(\bpsi)\big)\big(\hat{\btheta}(\bm{x}) - \btheta_{0}(\bpsi)\big)^\top\big\}$ given that in this case there is a prior distribution on the assumed model parameter. Additionally, \eqref{eq:pseudotrue} is a convenient choice as it can be shown that relevant estimators such as the \textit{maximum a posterior} (MAP) are asymptotically achieving that quantity, thus being MS-unbiased with respect to $\btheta_0(\bpsi)$. 

Before being able to state the main MBCRB results, there are two assumptions that are imposed to the true model, which are common assumptions in the BCRB context. Additionally, an assumption on the class of estimators the bound is applicable to is also made. Namely, 
$i)$ the derivative of the log-likelihood is zero-mean, $\E_{\bm{x}|\bpsi}\bigg\{\frac{\partial}{\partial\bpsi}\ln p(\bm{x}|\bpsi)\bigg\} = \mathbf{0}$; 
$ii)$ the prior distribution for $\bpsi$ is such that $p(\psi_i = \psi_{i,\textrm{min}}) = p(\psi_i = \psi_{i,\textrm{max}})=0$, where $\bpsi \in \Psi =\Psi_1 \times \cdots \times \Psi_{n_\psi}$ with $\Psi_i \triangleq [\psi_{i,\textrm{min}}, \psi_{i,\textrm{max}}]$ being the value range for each $\psi_i$, $i \in \{1, 2, \dots, n_\psi\}$ and $\psi_{i,\textrm{min}}$ and $\psi_{i,\textrm{max}}$ are independent of $\bpsi$; $iii)$ the estimator is MS-unbiased with respect to the pseudotrue parameter, that is $\E_{\bm{x}|\bpsi}\big\{\hat{\btheta}(\bm{x})\big\} = \btheta_0(\bpsi)$.
Furthermore, the following Lemmas summarize two results used in proving Theorem \ref{thm:MBCRB}, which states the main MBCRB result.


\begin{lemma}\label{cly:3}
For $i \in \{1, 2, \cdots, n_\psi\}$ and $j \in \{1, 2, \cdots, n_\theta\}$
\begin{equation}
    \int_{\Psi} \hat{\theta}_j(\bm{x}) \frac{\partial}{\partial\psi_i} p({\bm{x}, \bpsi})\drv\bpsi = 0
\end{equation}
where $\hat{\btheta} = (\hat{\theta}_1, \ldots, \hat{\theta}_m)^\top \in \R^{n_\theta \times 1}$ is the estimator of $\btheta_0(\bpsi)$.
\end{lemma}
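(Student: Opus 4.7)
The plan is to exploit the fact that $\hat{\theta}_j(\bm{x})$ has no dependence on $\bpsi$, so it can be pulled outside the integral over $\Psi$, reducing the claim to showing that $\int_{\Psi} \partial_{\psi_i} p(\bm{x},\bpsi)\,\drv\bpsi = 0$. From there the argument is a one-dimensional application of the fundamental theorem of calculus, combined with the vanishing-prior-at-the-boundary assumption $ii)$ stated just above the lemma.

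Concretely, I would first write
\begin{equation*}
\int_{\Psi} \hat{\theta}_j(\bm{x}) \frac{\partial}{\partial\psi_i} p(\bm{x},\bpsi)\,\drv\bpsi \;=\; \hat{\theta}_j(\bm{x}) \int_{\Psi} \frac{\partial}{\partial\psi_i} p(\bm{x},\bpsi)\,\drv\bpsi,
\end{equation*}
which is legal because $\hat{\theta}_j$ is a function of the data only. Next, using the product structure $\Psi = \Psi_1 \times \cdots \times \Psi_{n_\psi}$ and Fubini's theorem (together with the regularity implicit in the BCRB setting that allows differentiation under the integral sign), I would single out the $\psi_i$-integral:
\begin{equation*}
\int_{\Psi} \frac{\partial}{\partial\psi_i} p(\bm{x},\bpsi)\,\drv\bpsi \;=\; \int_{\Psi_{-i}}\!\left[\int_{\psi_{i,\mathrm{min}}}^{\psi_{i,\mathrm{max}}} \frac{\partial}{\partial\psi_i} p(\bm{x},\bpsi)\,\drv\psi_i\right]\drv\bpsi_{-i},
\end{equation*}
where $\bpsi_{-i}$ collects all components of $\bpsi$ except the $i$-th.

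Applying the fundamental theorem of calculus to the inner integral collapses it to a boundary evaluation,
\begin{equation*}
\int_{\psi_{i,\mathrm{min}}}^{\psi_{i,\mathrm{max}}} \frac{\partial}{\partial\psi_i} p(\bm{x},\bpsi)\,\drv\psi_i \;=\; p(\bm{x},\bpsi)\big|_{\psi_i=\psi_{i,\mathrm{max}}} - p(\bm{x},\bpsi)\big|_{\psi_i=\psi_{i,\mathrm{min}}}.
\end{equation*}
Factorising $p(\bm{x},\bpsi) = p(\bm{x}\mid\bpsi)\,p(\bpsi)$ and invoking assumption $ii)$, which forces $p(\bpsi)$ to vanish whenever $\psi_i$ hits either endpoint of $\Psi_i$, makes both boundary terms vanish. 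The outer integral over $\Psi_{-i}$ is then an integral of zero, which is zero; multiplying by $\hat{\theta}_j(\bm{x})$ preserves this, completing the argument.

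The routine bits are the Fubini rearrangement and the fundamental-theorem step; the only subtlety worth stating carefully is the interpretation of assumption $ii)$ as a statement about the marginal (and hence joint) prior density vanishing at the boundary slices $\{\psi_i=\psi_{i,\mathrm{min}}\}$ and $\{\psi_i=\psi_{i,\mathrm{max}}\}$, so that the endpoint values of $p(\bm{x},\bpsi)$ really are zero for every $\bm{x}$ and every $\bpsi_{-i}$. This is the crux of the lemma and the only place where an explicit appeal to the stated assumptions is needed; no property of the assumed model $\mathcal{M}$ or of the pseudotrue $\btheta_0(\bpsi)$ enters the proof.
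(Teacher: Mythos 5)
Your proof is correct, but it takes a different route from the paper's. The paper's own argument for this lemma is a two-line interchange: it writes $\int_{\Psi}\frac{\partial}{\partial\psi_i} p(\bm{x},\bpsi)\,\drv\bpsi = \frac{\partial}{\partial\psi_i}p(\bm{x}) = 0$, i.e.\ it swaps the derivative with the integral over $\Psi$ and observes that the marginal $p(\bm{x})$ carries no dependence on $\bpsi$, so its partial derivative vanishes; assumption $ii)$ is never invoked for this lemma. You instead run the Fubini--fundamental-theorem-of-calculus argument with the boundary terms killed by assumption $ii)$ --- which is precisely the technique the paper deploys for Lemma~2 (the integration-by-parts identity for $\theta_{0,j}(\bpsi)$), where the boundary-vanishing assumption genuinely is needed because the integrand there retains a $\bpsi$-dependent factor. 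The two routes buy slightly different things: the paper's version is shorter and leans on the standard regularity condition permitting differentiation under the integral sign, while yours replaces that interchange with an explicit boundary evaluation and so makes the role of the vanishing-prior assumption visible; both require comparable smoothness of $p(\bm{x},\bpsi)$, and both correctly reduce the claim to pulling the $\bpsi$-independent factor $\hat{\theta}_j(\bm{x})$ outside the integral. Either argument is acceptable.
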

\begin{proof}
Note that
$\int_{\Psi}\frac{\partial}{\partial\psi_i} p({\bm{x}, \bpsi})\drv\bpsi = \frac{\partial}{\partial\psi_i} p(\bm{x}) = 0$. Thus,
$
    \int_{\Psi}\hat{\theta}_j(\bm{x})\frac{\partial}{\partial\psi_i} p({\bm{x}, \bpsi})\drv\bpsi
    =
    \hat{\theta}_j(\bm{x}) \int_{\Psi} \frac{\partial}{\partial\psi_i} p({\bm{x}, \bpsi})\drv\bpsi = 0
$.
\end{proof}

\begin{lemma}\label{cly:4}
Given $\btheta_0(\bpsi) = (\theta_{0,1}(\bpsi), \ldots , \theta_{0,n_\theta}(\bpsi))^\top \in \R^{{n_\theta} \times 1}$,
\begin{align}
    \int_{\Psi}\theta_{0,j}(\bpsi)\frac{\partial}{\partial\psi_i} p({\bm{x}, \bpsi})\drv\bpsi 
    = - \int_{\Psi}\frac{\partial\theta_{0,j}(\bpsi)}{\partial\psi_i}p(\bm{x},\bpsi)\drv\bpsi
\end{align}
\end{lemma}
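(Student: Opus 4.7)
The plan is to establish the identity via integration by parts along the $\psi_i$ coordinate, exploiting assumption $ii)$ to kill the boundary term.

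First I would split the $n_\psi$-dimensional integral over $\Psi = \Psi_1 \times \cdots \times \Psi_{n_\psi}$ using Fubini's theorem, isolating the coordinate $\psi_i$ from the remaining coordinates $\bpsi_{-i} \in \Psi_{-i} \triangleq \prod_{k\neq i}\Psi_k$. This gives
\begin{equation*}
\int_\Psi \theta_{0,j}(\bpsi)\frac{\partial p(\bm{x},\bpsi)}{\partial\psi_i}\,\drv\bpsi = \int_{\Psi_{-i}}\!\!\left[\int_{\psi_{i,\textrm{min}}}^{\psi_{i,\textrm{max}}}\!\!\theta_{0,j}(\bpsi)\frac{\partial p(\bm{x},\bpsi)}{\partial\psi_i}\,\drv\psi_i\right]\drv\bpsi_{-i} .
\end{equation*}

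Next, on the inner one-dimensional integral I would apply classical integration by parts, treating $\theta_{0,j}(\bpsi)$ as the function being differentiated and $p(\bm{x},\bpsi)$ as the antiderivative factor, yielding
\begin{equation*}
\bigl[\theta_{0,j}(\bpsi)\,p(\bm{x},\bpsi)\bigr]_{\psi_i=\psi_{i,\textrm{min}}}^{\psi_i=\psi_{i,\textrm{max}}} - \int_{\psi_{i,\textrm{min}}}^{\psi_{i,\textrm{max}}} \frac{\partial\theta_{0,j}(\bpsi)}{\partial\psi_i}\,p(\bm{x},\bpsi)\,\drv\psi_i .
\end{equation*}
The boundary term is where assumption $ii)$ comes in: since $p(\bpsi)$ vanishes whenever $\psi_i$ hits $\psi_{i,\textrm{min}}$ or $\psi_{i,\textrm{max}}$, and $p(\bm{x},\bpsi) = p(\bm{x}|\bpsi)p(\bpsi)$, the joint density evaluated at the endpoints of $\Psi_i$ is zero (assuming $p(\bm{x}|\bpsi)$ is bounded, as is implicit in the regularity conditions of the BCRB). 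Hence the bracketed term drops out.

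Reassembling the remaining piece and undoing the Fubini split brings the result into the claimed form. The main obstacle, though minor, is justifying that the boundary contribution genuinely vanishes: one needs to argue that the marginal prior condition in assumption $ii)$ transfers to the joint density and that $\theta_{0,j}(\bpsi)$ does not grow fast enough at the boundaries to compensate. This is standard in the BCRB derivation and can be assumed as one of the usual regularity conditions on the pseudotrue mapping $\bpsi \mapsto \btheta_0(\bpsi)$.
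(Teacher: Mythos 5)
Your proposal is correct and follows essentially the same route as the paper: split off the $\psi_i$ coordinate, integrate by parts in $\psi_i$, and use assumption $ii)$ (via the factorization of the joint density through the prior) to annihilate the boundary term. Your added remarks on the boundedness of $p(\bm{x}|\bpsi)$ and of $\theta_{0,j}(\bpsi)$ at the boundary make explicit some regularity the paper leaves implicit, but the argument is the same.
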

\begin{proof}
We first define $\bpsi_{-i}$ as the vector containing all the elements in $\bpsi$ except for $\psi_{i}$, such that
{\small
\begin{equation}
    \int\theta_{0,j}(\bpsi)\frac{\partial}{\partial\psi_i} p({\bm{x}, \bpsi})\drv\bpsi =  \int\!\!\!\int\theta_{0,j}(\bpsi)\frac{\partial}{\partial\psi_i} p({\bm{x}, \bpsi}) \drv\psi_i\drv\bpsi_{-i}\nonumber
\end{equation}}
then, integrating by parts the integral over $\psi_i$, we obtain
{\small
\begin{align}
    &\int\!\!\!\int\theta_{0,j}(\bpsi)\frac{\partial}{\partial\psi_i} p({\bm{x}, \bpsi}) \drv\psi_i\drv\bpsi_{-i}\nonumber\\
    =& \int\bigg(\theta_{0,j}(\bpsi)p({\bm{x}, \bpsi})\bigg|_{\psi_i = \psi_{i,\textrm{min}}}^{\psi_i = \psi_{i,\textrm{max}}}
    - \int\frac{\partial\theta_{0,j}}{\partial\psi_i}p({\bm{x}, \bpsi})\drv\psi_i\bigg)\drv\bpsi_{-i}\nonumber\\
    = &- \int \frac{\partial\theta_{0,j}(\bpsi)}{\partial\psi_i}p({\bm{x}, \bpsi})\drv\bpsi,
\end{align}}

\noindent 
Notice that the term evaluating the joint distribution of $\bm{x}$ and $\bpsi$ at the boundaries of $\Psi_i$ is zero according to the second assumption made on a priori distribution $p(\bpsi)$, since $p({\bm{x}, \bpsi}) = p({\bm{x}|\bpsi})p(\bpsi_{-i}|\bpsi_{i})p(\bpsi_{i})$. 
\end{proof}

\begin{theorem}[Misspecified Bayesian CRB]\label{thm:MBCRB}
Given the true model $\mathcal{M}_\ast$ in \eqref{eq:TrueModel} parameterized by $\bpsi$ and the assumed model $\mathcal{M}$ in \eqref{eq:AssumedModel} parameterized by $\btheta$, the error covariance of any MS-unbiased estimator satisfies that
{\small
\begin{align}\label{eq:MBCRB}
    &\E_{\bm{x},\bpsi}\big\{\big(\hat{\btheta}(\bm{x}) - \btheta_{0}(\bpsi)\big)\big(\hat{\btheta}(\bm{x}) - \btheta_{0}(\bpsi)\big)^\top\big\}\nonumber\\
    &- \E_{\bpsi}\left\{\frac{\partial\btheta_0(\psi)}{\partial\bpsi}\right\}\bm{J}^{-1}\E_{\bpsi}\left\{\frac{\partial\btheta_0(\psi)}{\partial\bpsi}\right\}^\top \geq \mathbf{0} \;,
\end{align}}

\noindent where the inequality indicates positive semidefiniteness and $\bm{J}$ is the BFIM of $\bpsi$ in \eqref{eq:PFIM}. 
\end{theorem}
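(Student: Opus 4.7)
The plan is to follow the classical covariance inequality route used in the standard Bayesian CRB derivation, but applied to the pair (estimation error, joint score of the true model), with the key twist that the estimator targets the pseudotrue $\btheta_0(\bpsi)$ rather than $\bpsi$ itself. This twist is what will make the Jacobian $\partial\btheta_0/\partial\bpsi$ appear in the bound. Concretely, I set $\bm{u} = \hat{\btheta}(\bm{x}) - \btheta_0(\bpsi)$ and $\bm{v} = \partial \ln p(\bm{x},\bpsi)/\partial\bpsi$, observe that both are zero-mean under $p(\bm{x},\bpsi)$ (the first by MS-unbiasedness combined with iterated expectation, the second by assumptions (i)--(ii) after integration by parts), and apply the matrix covariance inequality
\begin{equation*}
\E\{\bm{u}\bm{u}^\top\} \;\succeq\; \E\{\bm{u}\bm{v}^\top\}\big(\E\{\bm{v}\bm{v}^\top\}\big)^{-1}\E\{\bm{v}\bm{u}^\top\}.
\end{equation*}

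Next, I would evaluate the two moment matrices on the right-hand side. For the cross-covariance, using $\bm{v}\,p(\bm{x},\bpsi) = \partial p(\bm{x},\bpsi)/\partial\bpsi$, the $(j,i)$ entry of $\E\{\bm{u}\bm{v}^\top\}$ splits into one term involving $\hat\theta_j(\bm{x})$ and one involving $\theta_{0,j}(\bpsi)$. Lemma~\ref{cly:3} kills the $\hat\theta_j$ term (after interchanging the $\bm{x}$ and $\bpsi$ integrations), and Lemma~\ref{cly:4} turns the $\theta_{0,j}$ term into $\E_{\bpsi}\{\partial\theta_{0,j}/\partial\psi_i\}$; assembling these entries gives $\E\{\bm{u}\bm{v}^\top\} = \E_{\bpsi}\{\partial\btheta_0/\partial\bpsi\}$. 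For $\E\{\bm{v}\bm{v}^\top\}$, I would decompose $\ln p(\bm{x},\bpsi) = \ln p(\bm{x}|\bpsi) + \ln p(\bpsi)$, expand the outer product into four pieces, and use assumption (i) (zero-mean log-likelihood derivative under $p(\bm{x}|\bpsi)$) to annihilate the two cross terms; the remaining two terms are precisely $\bm{J}_D$ and $\bm{J}_P$ in \eqref{eq:PFIM}, so $\E\{\bm{v}\bm{v}^\top\} = \bm{J}$. Substituting both into the inequality yields \eqref{eq:MBCRB}.

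The main obstacle I anticipate is not the algebraic assembly but rather the careful bookkeeping around the interchanges of integration and differentiation: Lemma~\ref{cly:4} relies on the boundary-vanishing assumption (ii) together with the factorization of the prior in $\psi_i$, and the zero-mean property of $\bm{v}$ under the joint distribution relies on the same boundary argument applied without the $\theta_{0,j}$ factor. I would also verify that applying Lemma~\ref{cly:3} requires swapping the order of the $\bm{x}$ and $\bpsi$ integrals, which is legitimate under the regularity needed for the BFIM to exist. Once these analytic points are settled, the identification $\E\{\bm{u}\bm{v}^\top\} = \E_{\bpsi}\{\partial\btheta_0/\partial\bpsi\}$ and $\E\{\bm{v}\bm{v}^\top\} = \bm{J}$ plugs directly into the covariance inequality to deliver the theorem.
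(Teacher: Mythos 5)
Your proposal is correct and follows essentially the same route as the paper: the cross-moment $\E\{\bm{u}\bm{v}^\top\}=\E_{\bpsi}\{\partial\btheta_0/\partial\bpsi\}$ is obtained from Lemmas~\ref{cly:3} and~\ref{cly:4} exactly as in the paper's proof, and $\E\{\bm{v}\bm{v}^\top\}=\bm{J}$ via the same use of assumption $(i)$ to kill the cross terms. The only cosmetic difference is that you invoke the matrix covariance inequality directly, whereas the paper scalarizes with arbitrary vectors $\bm{a},\bm{b}$, applies Cauchy--Schwarz, and then picks the optimal $\bm{b}$ --- which is precisely the proof of that inequality.
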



\begin{proof}

According to Lemmas \ref{cly:3} and \ref{cly:4}, we have that for the $j$-th element in $\Theta$
%
\begin{align}
    \int_{\Psi}\!\!\big(\hat{\theta}_j(\bm{x}) \!-\! \theta_{0,j}(\bpsi)\big) \frac{\partial}{\partial\psi_i} p(\bm{x},\bpsi)\drv\bpsi \!=\!\!\!\int_{\Psi}\!\!\frac{\partial\theta_{0,j}(\bpsi)}{\partial\bpsi}p(\bm{x},\bpsi)\drv\bpsi. \nonumber
\end{align}
which in vector case is such that
%
\begin{align}
\!\!\!\!\int_{\Psi}\!\!\big(\hat{\btheta}(\bm{x}) - \btheta_{0}(\bpsi)\big) \!\bigg(\!\frac{\partial}{\partial\bpsi} p(\bm{x},\bpsi)\!\!\bigg)^{\!\!\top}\!\!\!\drv\bpsi = \!\!\!\int_{\Psi}\!\frac{\partial\btheta_0(\bpsi)}{\partial\bpsi}p(\bm{x},\bpsi)\drv\bpsi \;. \nonumber
\end{align}

Assuming $\bm{x}$ is continuous in its domain,  integrating with respect to $\bm{x}$ gives
\begin{align}
    &\iint\big(\hat{\btheta}(\bm{x}) - \btheta_{0}(\bpsi)\big) \bigg(\frac{\partial}{\partial\bpsi} \ln p(\bm{x},\bpsi)\bigg)^\top p(\bm{x},\bpsi)\drv\bpsi\drv\bm{x}\nonumber\\
    =& \iint\frac{\partial\btheta_0(\bpsi)}{\partial\bpsi}p(\bm{x},\bpsi)\drv\bpsi\drv\bm{x},
\end{align}
where the integral limits are neglected for a cleaner notation.
We can multiply both side by two arbitrary vectors $\bm{a} \in \R^{n_\theta\times 1}$ and  $\bm{b} \in \R^{n_\psi\times 1}$ 
{\small
\begin{align}
    &\iint\bm{a}^\top\big(\hat{\btheta}(\bm{x}) - \btheta_{0}(\bpsi)\big) \bigg(\frac{\partial}{\partial\psi} \ln p(\bm{x},\bpsi)\bigg)^\top\bm{b} p(\bm{x},\bpsi)\drv\bpsi\drv\bm{x} \nonumber\\
    &= \bm{a}^\top\iint\frac{\partial\btheta_0(\bpsi)}{\partial\bpsi}p(\bm{x},\bpsi)\drv\bpsi\drv\bm{x}\bm{b}
\end{align}}
such that the above equation becomes a scalar identity and we can utilize the Cauchy-Schwarz inequality
{
\begin{align}
    &\iint\bm{a}^\top\big(\hat{\btheta}(\bm{x}) - \btheta_{0}(\bpsi)\big)\big(\hat{\btheta}(\bm{x}) - \btheta_{0}(\bpsi)\big)^\top\bm{a}p(\bm{x},\bpsi)\drv\bpsi\drv\bm{x}\cdot \nonumber\\
    &\iint\bm{b}^\top\bigg(\frac{\partial}{\partial\bpsi} \ln p(\bm{x},\bpsi)\bigg)\bigg(\frac{\partial}{\partial\bpsi} \ln p(\bm{x},\bpsi)\bigg)^\top\bm{b} p(\bm{x},\bpsi)\drv\bpsi\drv\bm{x} \nonumber\\
    \geq& \bigg(\bm{a}^\top\iint\frac{\partial\btheta_0(\bpsi)}{\partial\bpsi}p(\bm{x},\bpsi)\drv\bpsi\drv\bm{x}\bm{b}\bigg)^2.
\end{align}}
We can rewrite the expression identifying terms
\begin{align}\label{eq:CSineq3}
    &\bm{a}^\top\E_{\bm{x},\bpsi}\big\{\big(\hat{\btheta}(\bm{x}) - \btheta_{0}(\bpsi)\big)\big(\hat{\btheta}(\bm{x}) - \btheta_{0}(\bpsi)\big)^\top\big\}\bm{a}\cdot\bm{b}^\top\bm{J}\bm{b} \nonumber \\
    \geq& \bigg(\bm{a}^\top\E_{\bpsi}\big\{\frac{\partial\btheta_0(\psi)}{\partial\bpsi}\big\}\bm{b}\bigg)^2,
\end{align}
where
\begin{equation}\label{eq:PFIM2}
    \bm{J} = \E_{\bm{x},\bpsi}\bigg\{\bigg(\frac{\partial}{\partial\bpsi} \ln p({\bm{x}, \bpsi})\bigg)\bigg(\frac{\partial}{\partial\bpsi} \ln p({\bm{x}, \bpsi})\bigg)^\top\bigg\}\nonumber\\
\end{equation}
is exactly the BFIM defined in \eqref{eq:PFIM}, after making use of the first regularity condition $\E_{\bm{x}|\bpsi}\bigg\{\frac{\partial}{\partial\bpsi}\ln p(\bm{x}|\bpsi)\bigg\} = \mathbf{0}$ such that there is no cross-terms between likelihood and a priori distributions. 

Since $\bm{a}$ and $\bm{b}$ are arbitrary, we consider
 \begin{equation}
     \bm{b} = \bm{J}^{-1}\E_{\bpsi}\left\{\frac{\partial\btheta_0(\bpsi)}{\partial\bpsi}\right\}^\top\bm{a}
 \end{equation}
and substitute it in \eqref{eq:CSineq3}
\begin{align}
    &\bm{a}^\top\E_{\bm{x},\bpsi}\big\{\big(\hat{\btheta}(\bm{x}) - \btheta_{0}(\bpsi)\big)\big(\hat{\btheta}(\bm{x}) - \btheta_{0}(\bpsi)\big)^\top\big\}\bm{a}\nonumber\\
    &\times \bm{a}^\top \E_{\bpsi}\left\{\frac{\partial\btheta_0(\bpsi)}{\partial\bpsi}\right\}\bm{J}^{-1} \E_{\bm{x},\bpsi}\left\{\frac{\partial\btheta_0(\bpsi)}{\partial\bpsi}\right\}^\top\bm{a}\nonumber\\
    \geq& \bigg(\bm{a}^\top\E_{\bpsi}\left\{\frac{\partial\btheta_0(\bpsi)}{\partial\bpsi}\right\}\bm{J}^{-1}\E_{\bpsi}\left\{\frac{\partial\btheta_0(\bpsi)}{\partial\bpsi}\right\}^\top\bm{a}\bigg)^2 \;
\end{align}
to finally obtain
{\small
\begin{align}\label{eq:MBCRB0}
    & \bm{a}^\top\E_{\bm{x},\bpsi}\big\{\big(\hat{\btheta}(\bm{x}) - \btheta_{0}(\bpsi)\big)\big(\hat{\btheta}(\bm{x}) - \btheta_{0}(\bpsi)\big)^\top\big\}\bm{a}\nonumber\\
    & \geq \bm{a}^\top\E_{\bpsi}\left\{\frac{\partial\btheta_0(\bpsi)}{\partial\bpsi}\right\}\bm{J}^{-1}\E_{\bpsi}\left\{\frac{\partial\btheta_0(\bpsi)}{\partial\bpsi}\right\}^\top\bm{a} \;,
\end{align}}

\noindent from which the main result of the Theorem \ref{thm:MBCRB} follows.   
\end{proof}


The MBCRB in \eqref{eq:MBCRB} appears in a sandwich form consisting of the BFIM inverse in the middle and two terms on the both sides involving the parametric transformation from $\Psi$ to $\Theta$ through the pseudotrue parameter, which is similar to the MCRB results \cite{richmond2015parameter, fortunati2017performance, richmond2018constraints}. Notice that $i)$ the MBCRB is accounting for the prior on $\bpsi$ and $\btheta$ through the expectations in \eqref{eq:MBCRB} but also implicitly via $\btheta_{0}(\bpsi)$; and $ii)$ when the model is correctly specified the pseudotrue coincides with $\btheta_{0}(\bpsi) = \bpsi$, and $\textrm{MBCRB}(\btheta=\bpsi) = \textrm{BCRB}(\bpsi) = \bm{J}^{-1}$. 

\section{APPLICATION TO linear and Gaussian SYSTEMS}
\label{sec:lineargaussian}
In this section we particularize the MBCRB result to bound the special class of mismatched linear and Gaussian systems, which are very common and serve as the foundation of complex system. More precisely, the true model is generally described  by  
{
\begin{align}
    \bpsi \sim \N(\bm{\mu}_{\bpsi}, \bm{\Sigma}_{\bpsi}), \,\,\, \bm{x}_n|\bpsi \sim \N(\bm{H}_*\bpsi, \bm{\Sigma}_*),\,\,\, n = 1, \ldots, N \nonumber
\end{align}
}

\noindent which generates the observations and from which $N$ samples are available. The true model is parameterized by $\{\bm{\mu}_{\bpsi}, \bm{\Sigma}_{\bpsi}, \bm{H}_* , \bm{\Sigma}_* \}$. 
Conversely, the assumed model is
\begin{equation}
    \btheta \sim \N(\bm{\mu}_{\btheta}, \bm{\Sigma}_{\btheta}), \,\,
    \bm{x}_n|\btheta \sim \N(\bm{H}\btheta, \bm{\Sigma}), \,\, n = 1, \ldots, N \nonumber
\end{equation}
\noindent where we notice that $\btheta$ may not be in the same parameter space as $\bpsi$, and that the model is parameterized by $\{\bm{\mu}_{\btheta}, \bm{\Sigma}_{\btheta}, \bm{H} , \bm{\Sigma} \}$. It can be shown that the value of $\btheta$ minimizing the KL divergence between $\N(\bm{H}_*\bpsi, \bm{\Sigma}_*)$ and $\N(\bm{H}\btheta, \bm{\Sigma}) \N(\bm{\mu}_{\btheta}, \bm{\Sigma}_{\btheta})$ is 
{\footnotesize
\begin{align}\label{eq:pseudotrueEg}
    \btheta_0(\bpsi) \!\!&=\!\! {\bigg(\!\!N\bm{H}^\top\bm{\Sigma}^{-1}\bm{H} + \bm{\Sigma}_{\btheta}\!\!^{-1}\!\!\bigg)\!\!}^{-1}\!\! \bigg(\!\!N\bm{H}^\top\bm{\Sigma}^{-1}\bm{H}_*\bpsi + \bm{\Sigma}_{\btheta}^{-1}\bm{\mu}_{\btheta}\!\!\bigg)\!,
\end{align}}
\noindent which is the pseudotrue parameter as defined in \eqref{eq:pseudotrue}.

Based on the assumed model and the observation matrix $\bm{X} = (\bm{x}_1, \dots, \bm{x}_N )$, we can derive \textit{optimal} estimators of $\btheta$. For instance, the MAP estimator becomes
{\footnotesize
\begin{align}
    \hat{\btheta}(\bm{X}) &= \argmax_{\btheta}\bigg(\ln p(\bm{X}|\btheta) + \ln p(\btheta)\bigg) \\
    &= \!\!{\bigg(N\bm{H}^\top\bm{\Sigma}^{-1}\bm{H} + \bm{\Sigma}_{\btheta}^{-1}\bigg)\!\!}^{-1} \bigg(\sum\limits_{n=1}^N\bm{H}^\top\bm{\Sigma}^{-1}\bm{x}_n + \bm{\Sigma}_{\btheta}^{-1}\bm{\mu}_{\btheta}\bigg) \;, \nonumber
\end{align}}

\noindent which is indeed MS-unbiased since $\E_{\bm{x}|\bpsi}\big\{\hat{\btheta}(\bm{X})\big\} = \btheta_0(\bpsi)$. To compute the MBCRB for this class of mismatched models we follow the result in Theorem \ref{thm:MBCRB}
\begin{align}
    \E_{\bm{x},\bpsi}\bigg\{\!\!\big(\hat{\btheta}(\bm{X}) \!-\! \btheta_0(\bpsi)\big){\big(\hat{\btheta}(\bm{X}) \!-\! \btheta_0(\bpsi)\big)\!\!}^\top\!\bigg\} \geq\bm{A}\bm{J}^{-1}\!\bm{A}^\top
\end{align}
\noindent with $\bm{A}$ defined as
%
{
\begin{align}
    \E_{\bpsi}\bigg\{\!\!\frac{\partial\btheta_0(\bpsi)}{\partial\bpsi}\!\!\bigg\} \!&=\! \bigg(\!\!N\bm{H}^\top\bm{\Sigma}^{-1}\bm{H} + \bm{\Sigma}_{\btheta}^{\!\!-1}\!\!\bigg)^{-1}\!\!\!\!N\bm{H}^\top\bm{\Sigma}^{-1}\bm{H}_* \!\!\doteq\!\! \bm{A}
\end{align}}
and the BFIM for $\bpsi$ computed as $\bm{J} = N\bm{H}_*^\top\bm{\Sigma}_*^{-1}\bm{H}_* + \bm{\Sigma}_{\bpsi}^{-1}$.
 
%

\section{SIMULATION RESULTS}
\label{sec:simulation}

We validated the MBCRB results in the linear and Gaussian context described in Section \ref{sec:lineargaussian}. The particular values of the true model are such that $n_\psi=3$, with the a priori mean and covariances being respectively
$\bm{\mu}_{\bpsi} = [10,\,20,\, 5]^\top$ and $\bm{\Sigma}_{\bpsi} = \sigma_{\bpsi}^2\bm{I} = 0.5\bm{I}$, where $\bm{I}$ denotes the identity matrix with the corresponding dimension. The observation model is set to $\bm{H} = h_*\bm{I}$ and $h_* = 1$. The covariance of the observed data is $\bm{\Sigma}_* = \sigma_*^2\bm{Q}$, with $\sigma_*^2 = 0.04$ and $\bm{Q}_{i,j} = \rho^{|i-j|}$, following an order-1 auto-regressive (AR-1) signal model \cite{Imbiriba_echo_2018}, controlled by a correlation parameter $\rho=0.5$,
leading to a signal-to-noise ratio of $\textrm{SNR} = 34$ dB.
The specific values of the parameters in the assumed model are discussed for the different scenarios.

\subsection{Comparison of MBCRB and BCRB}
In this experiment, the parameters of the misspecified assumed model are such that $\bm{\mu}_{\btheta} = [8,\,18,\, 6]^\top$ and $\bm{\Sigma} = \sigma^2\bm{I} = 0.1\bm{I}$. The rest of the assumed parameters coincide with those of the true model. The Monte Carlo simulations are averaging $10^5$ independent realizations, where each time $N$ samples are be observed. For the sake of clarity, the results in Fig. \ref{fig:RMSEvsMBCRBvsBCRB} depict the root mean square error (RMSE) and theoretical bound of the first element in the parameter vector, that is $\theta_1$, for different numbers of samples $N$. Although the usual BCRB fails to lower-bound the variance of the estimation error when low number of samples are available (i.e., the prior plays a bigger role), the MBCRB can lower-bound it tightly.

\begin{figure}[htb]
\vspace{-0.2cm}
  \centering
  \centerline{\includegraphics[width=0.4\textwidth]{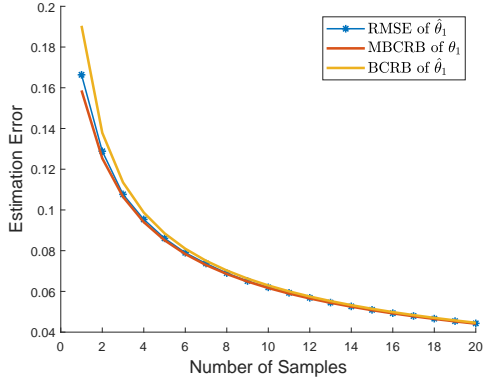}}
 \vspace{-0.3cm}
\caption{RMSE vs MBCRB and BCRB as a function of $N$.}
\label{fig:RMSEvsMBCRBvsBCRB}
\vspace{-0.5cm}
\end{figure}

\subsection{Extended Biased Bound}
Given that in the proposed setup both true and assumed models were conveniently chosen such that 
$\btheta$ and $\bpsi$ belong to the same vector space $\Theta=\Psi$, the bound can be extended to bound the covariance of $\hat{\btheta} - \bpsi$. Similar to what's discussed in \cite{richmond2015parameter, fortunati2016misspecified, fortunati2017performance}, such bound involves the MBCRB and an additional term:
%
\begin{align}\label{eq:biasedBound}
     &\E_{\bm{x},\bpsi}\big\{\big(\hat{\btheta}(\bm{x}) - \bpsi\big)\big(\hat{\btheta}(\bm{x}) - \bpsi\big)^\top\big\}\nonumber\\
    &\geq \E_{\bm{x},\bpsi}\left\{\frac{\partial\btheta_0(\bpsi)}{\partial\bpsi}\right\}\bm{J}^{-1}\E_{\bm{x},\bpsi}\left\{\frac{\partial\btheta_0(\bpsi)}{\partial\bpsi}\right\}^\top + \bm{r}\bm{r}^\top,
\end{align}
where the biased term is $\bm{r} = \btheta_0(\bpsi) - \bpsi$.
Under the same parameter settings of the previous experiment except for that the number of samples $N$ is fixed to $40$, Fig. \ref{fig:RMSEvsbiasedBound} shows that the error between the estimator and the true parameter can be lower-bounded by the extended biased bound in \eqref{eq:biasedBound}.
\begin{figure}[htb]
  \centering
  \centerline{\includegraphics[width=0.4\textwidth]{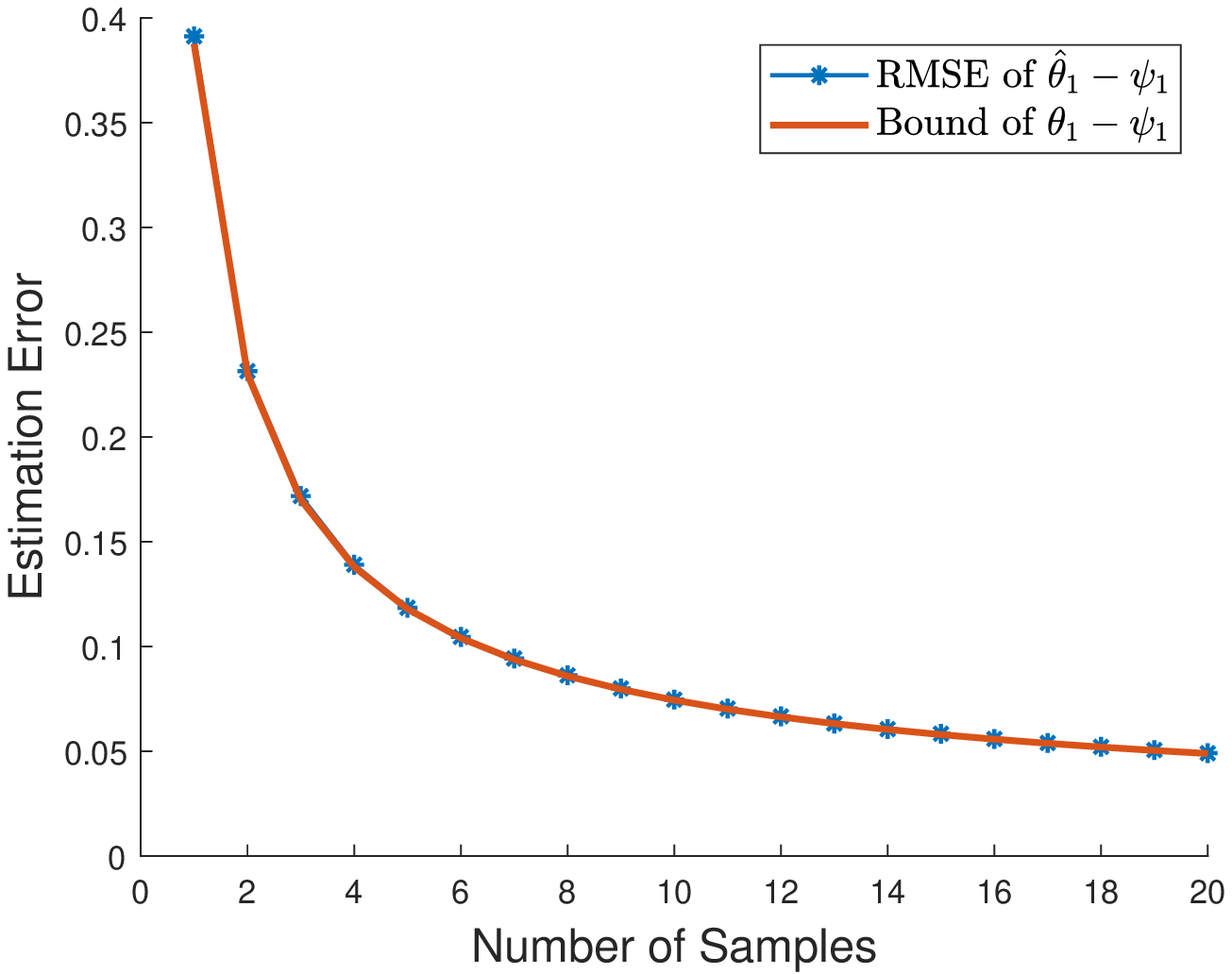}}
 \vspace{-0.3cm}
\caption{RMSE vs bound on $\hat{\btheta} - \bpsi$ as a function of $N$.}
\label{fig:RMSEvsbiasedBound}
\vspace{-0.25cm}
\end{figure}

\subsection{Different Levels of model misspecification}
A set of experiments are done to compare the RMSE and the bound on $\hat{\btheta} - \bpsi$ under different levels of model misspecification. For compatibility, we leverage the same true and assumed models as described at the beginning of the section except for the following mismatched parameters.
The simulation shown in Fig. \ref{fig:RMSEvsbiasedBound_different_level} (top) is implemented with number of samples $N = 50$ and the misspecified parameter is $\bm{H} = h\bm{I}$ for several values of $h$. The RMSE reaches the minimum value when $h = h_* = 1$, as expected. Similarly, Fig.\ref{fig:RMSEvsbiasedBound_different_level} (bottom) shows how the RMSE and bound fluctuate with the varying covariance of the observation model $\bm{\Sigma} = \sigma^2\bm{I}$. The simulations are implemented with $N = 500$ to get smoother curves. 
Notice that since $\bm{\Sigma}_* = \sigma_*^2\bm{Q}$ and there is no value of $\btheta$ that would make the assumed model to be exactly the same as the true model. However, the bound is able to predict the RMSE for various levels of mismatches in $\bm{\Sigma}$ compared to $\bm{\Sigma}_*$.   



\begin{figure}[t]
    \centering
    \includegraphics[width=0.4\textwidth]{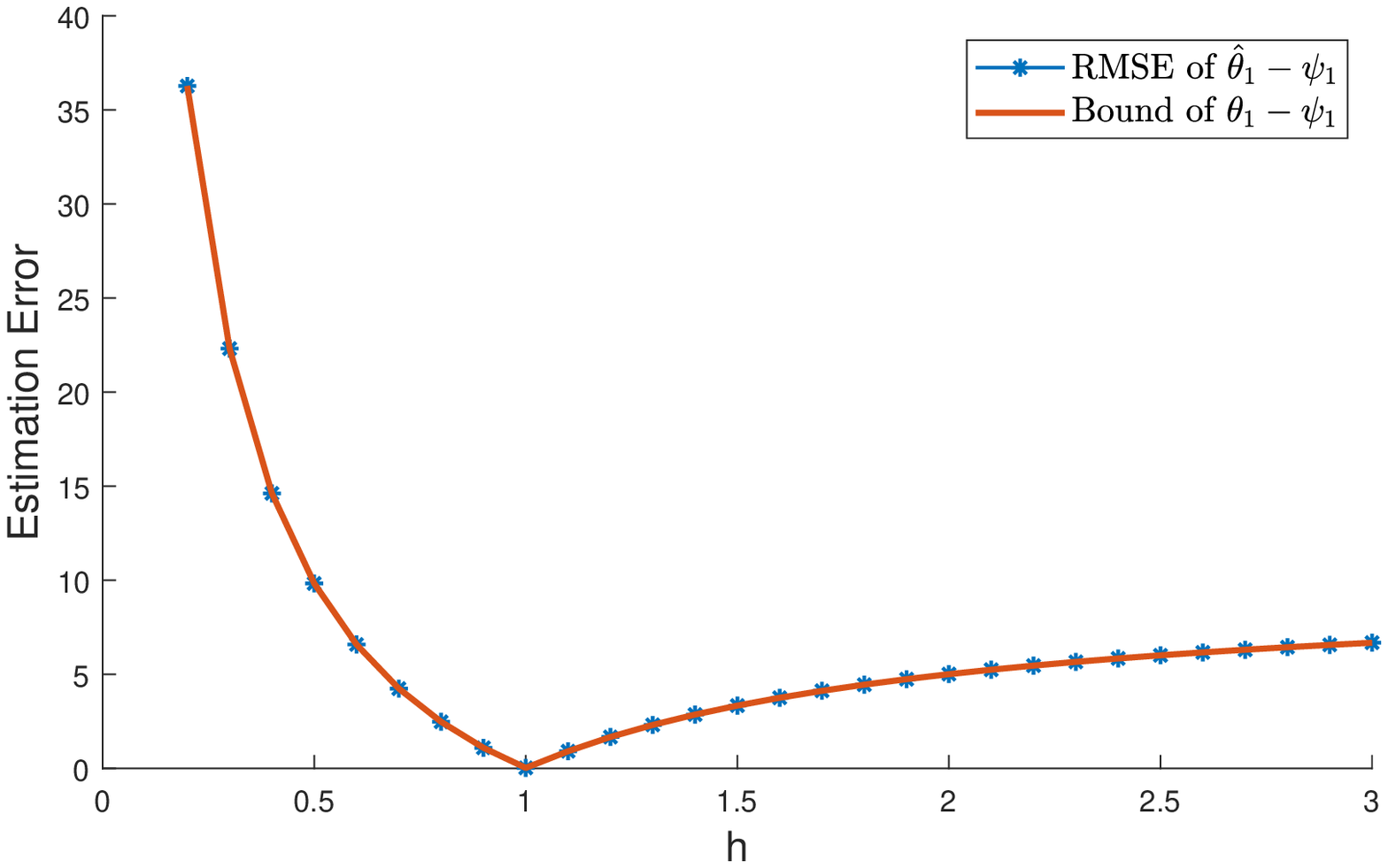}\label{fig:RMSEvsbiasedBound:different_h}
    \vspace{-0.3cm}
    \includegraphics[width=0.4\textwidth]{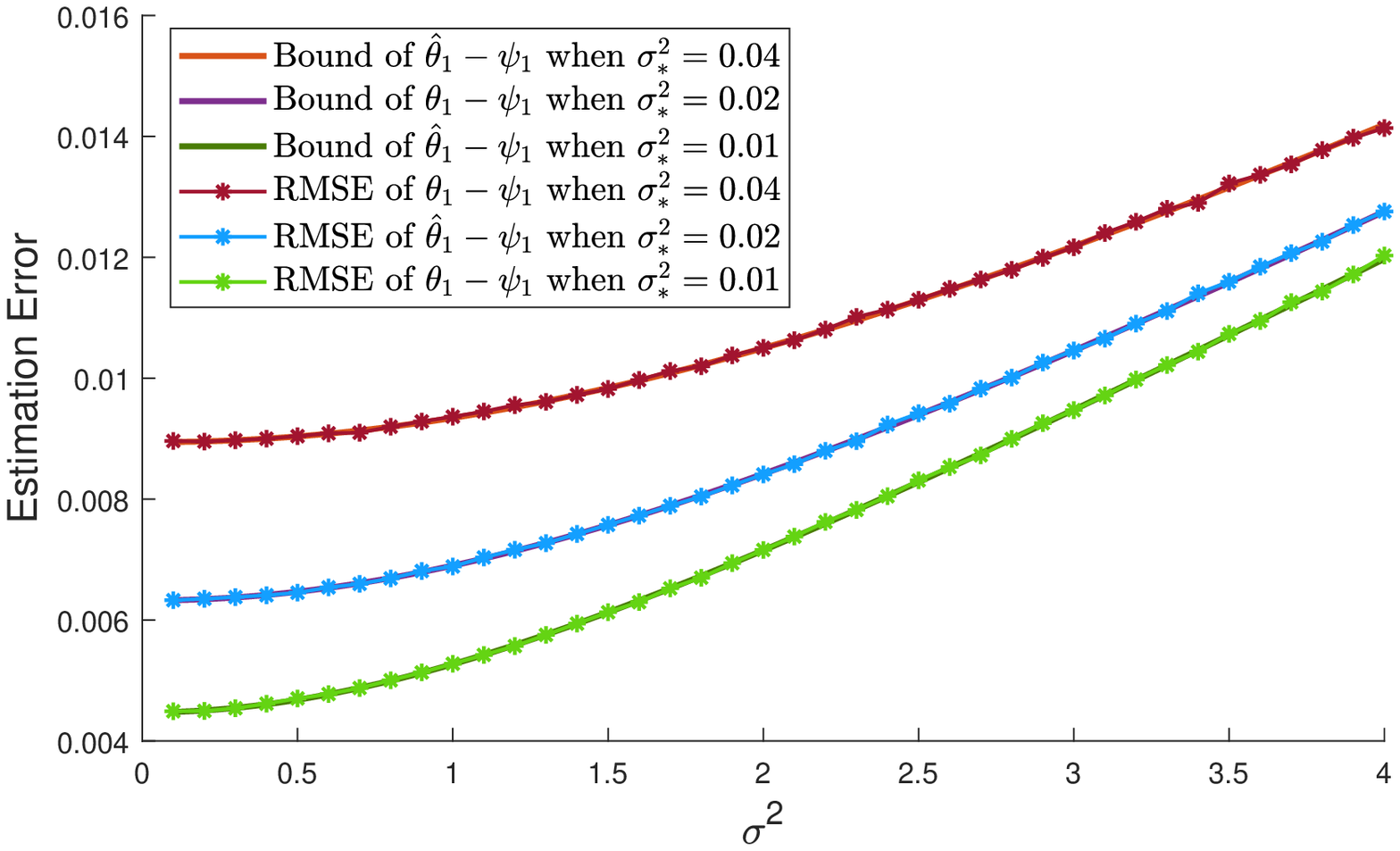}\label{fig:RMSEvsbiasedBound:different_Sigma}
    \caption{RMSE vs bound on $\hat{\btheta} - \bpsi$ under different model misspecifications: when varying $h$ (top panel) and varying $\sigma^2$ (bottom panel).}
    \label{fig:RMSEvsbiasedBound_different_level}
\end{figure}

\section{Conclusion}
\label{sec:conclusion}
Being able to predict the estimation error of an estimator is important for benchmarking and design purposes. This paper extends the works on CRB-type bounds for misspecified models to a general Bayesian setting where the parameters of true and assumed models are assumed random variables.
A new possible pseudotrue parameter definition is proposed, accounting for the Bayesian modeling of interest here, which is fundamental to derive the so-called MBCRB. The bound is particularized to the case of mismatched linear and Gaussian models, yielding to closed-form expressions that are simpler to compute and interpret, which can be useful in real-world (Bayesian) linear regression applications. In cases where true and assumed parameters belong to the same parameter space, we also provided an extended biased bound on the error between both which leverages the previous MBCRB result. 
Simulation results validates the proposed MBCRB for a number of misspecified situations.

\vfill\pagebreak

\bibliographystyle{IEEEtran}
\bibliography{strings,refs}

\end{document}